\DeclareMathOperator*{\maximize}{maximize}
\DeclareMathOperator*{\argmax}{arg\,max}
\DeclareMathOperator*{\mF}{\mathcal{F}}
\DeclareMathOperator*{\ALG}{\text{ALG}}
\definecolor{cof}{RGB}{219,144,71}
\definecolor{pur}{RGB}{186,146,162}
\definecolor{greeo}{RGB}{91,173,69}
\definecolor{greet}{RGB}{52,111,72}
\newtheorem{theorem}{Theorem}[section]
\newtheorem{proposition}[theorem]{Proposition}
\newtheorem{remark}[theorem]{Remark}
\newtheorem{example}[theorem]{Example}
\title{Permutatorial Optimization via the Permutahedron}
\author{
J. Carlos Mart\'{i}nez Mori\thanks{Corresponding author. Center for Applied Mathematics, Cornell University. Contact: \texttt{jm2638@cornell.edu}.} 
\and 
Samitha Samaranayake\thanks{School of Civil and Environmental Engineering, Cornell University. Contact: \texttt{samitha@cornell.edu}.}
}
\date{}
\begin{document}

\maketitle

\vspace{-1cm}
\begin{abstract}
A water company decides to expand its network with a set of water lines, but it cannot build them all at once. 
However, it starts reaping benefits from a partial expansion.
In what order should the company build the lines? 
We formalize a class of permutatorial problems with combinatorial/continuous subproblems capturing applications of incremental deployment. 
We show that, for additive/linear objective functions, efficient polyhedral methods for the subproblems extend to the permutatorial problem. 
Our main technical ingredient is the permutahedron.
\vspace{0.1cm}

\noindent\emph{Keywords: combinatorial optimization, permutatorial optimization, infrastructure planning}
\end{abstract}

\section{Introduction}
\label{sec: introduction}
Let $E = \{1, 2, \ldots, m\}$ be a ground set of elements and let $\mF \subseteq 2^E$ be a collection of subsets of the ground set.
For brevity we denote $[m] := \{1, 2, \ldots, m\}$.
Let $f: 2^E \rightarrow \mathbb{R}$ be a set function.
Our starting point is combinatorial problems of the form
\begin{align}
\label{eq: combinatorial optimization}
    \max\{f(S): S \in \mF\},
\end{align}
where $\mF$ is the feasible region and $f$ is the objective function.
Let $\Pi$ be the set of permutations of $E$\textemdash the bijections from $E$ onto itself.
Note that each permutation $\pi \in \Pi$ induces a distinct maximal chain $\emptyset = H^0(\pi) \subset H^1(\pi) \subset H^2(\pi) \subset \cdots \subset H^m(\pi) = E$ of subsets of $E$ ordered by inclusion, where $H^j(\pi) := \{i \in E: \pi(i) \leq j\}$.
For a subset $H \subseteq E$ of elements, let $\mF(H) := \{S \subseteq H : S \in \mF \}$ be the restriction of $\mF$ to $H$.
We consider permutatorial problems of the form
\begin{align}
\label{eq: permutatorial optimization - combinatorial}
    \max\{g(\pi) : \pi \in \Pi\},
\end{align}
where $g: \Pi \rightarrow \mathbb{R}$ is the permutation function given by
\begin{align*}
    g(\pi) := \sum_{j=1}^m \max\{f(S): S \in \mF(H^j(\pi)) \}.
\end{align*}
This form captures problems where a set of elements to be realized has been fixed, but the order in which they are realized is to be determined.
The objective function sums over the objective values of a sequence of \emph{decoupled} combinatorial subproblems of the form (\ref{eq: combinatorial optimization}), each of which may only use elements realized by their corresponding step.
At any given step, whether a combination of realized elements can be used is still subject to $\mF$.
Our main result is that, for $f$ additive, efficient polyhedral methods for the combinatorial subproblem (\ref{eq: combinatorial optimization}) extend to the permutatorial problem (\ref{eq: permutatorial optimization - combinatorial}). 
\begin{theorem}
\label{theorem: main - combinatorial}
If $f$ is additive and (\ref{eq: combinatorial optimization}) can be optimized in polynomial time via the polyhedral approach, then (\ref{eq: permutatorial optimization - combinatorial}) can be optimized in polynomial time via the polyhedral approach.
\end{theorem}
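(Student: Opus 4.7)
The plan is to reformulate the permutatorial problem as linear optimization over an extended polytope that couples $m$ copies of the subproblem polytope $P_\mathcal{F}$ (one per step) with a polyhedral encoding of permutations through the permutahedron. First, I exploit additivity: writing $f(S) = c^\top \chi_S$ for a weight vector $c \in \mathbb{R}^E$, each step is a linear program over $P_{\mathcal{F}(H^j(\pi))}$, and since for fixed $\pi$ the step maximizations are independent I can swap the sum with the inner max to obtain
\[
\max_{\pi \in \Pi} g(\pi) \;=\; \max\Big\{ \sum_{j=1}^m c^\top y^j \;:\; \pi \in \Pi,\; y^j \in P_{\mathcal{F}(H^j(\pi))} \text{ for all } j \Big\}.
\]

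Next, I would introduce decision variables encoding $\pi$ in the permutahedron (or, equivalently, in the Birkhoff polytope, which projects onto it) together with the step-wise selection variables $y^1,\ldots,y^m$. Each $y^j$ is constrained to lie in $P_\mathcal{F}$, and linear linking constraints translate the relation $\pi(i) > j \Rightarrow y^j_i = 0$ into inequalities relating $y^j$ to cumulative sums of the permutation variables. The resulting objective $\sum_j c^\top y^j$ is linear in the joint $(y,\pi)$-vector, and the polytope admits polynomial-time separation: polynomial separation over $P_\mathcal{F}$ holds by hypothesis, polynomial separation over the permutahedron is available via submodular function minimization over $2^{[m]}$ (it is the base polytope of a submodular function), and the linking inequalities are $O(m^2)$ in number, so the ellipsoid method runs in polynomial time.

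The main obstacle---and the genuine technical content---is showing that this extended LP is integer-valued, i.e.\ that its optimum equals $\max_{\pi \in \Pi} g(\pi)$. The argument must exploit the fine polyhedral structure of the permutahedron together with the nestedness of the chain $H^1(\pi) \subset \cdots \subset H^m(\pi)$ and the linearity of additive $f$; I anticipate either exhibiting chain-aware linking inequalities that tighten the naive formulation, or constructing an explicit decomposition of any fractional optimum into a convex combination of integer $(\pi, y^1,\ldots,y^m)$ tuples of at least the same objective value. The subtlety is real: a simple product of $P_\mathcal{F}$ with the Birkhoff polytope under plain linking need not be integer for adversarial $\mathcal{F}$, so the permutahedron-based construction has to do genuine work. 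Once integrality is in hand, the remaining steps (feasibility encoding, separation, and the ellipsoid call) are essentially bookkeeping inherited from the subproblem's polyhedral description.
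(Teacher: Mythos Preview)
Your high-level strategy matches the paper's: couple $m$ copies of $P(\mathcal{F})$ with the permutahedron $P(\Pi)$ through linear linking constraints, separate in polynomial time over each piece, and argue the relaxation is exact. You have also correctly located the real work in that last step.

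The gap is that you have neither written down the linking inequalities nor executed an integrality argument, and the mechanism the paper uses is not either of the two you anticipate. The paper does not link the subproblem variables to the permutation variable $y$ directly; it inserts intermediate variables $h^1,\ldots,h^m\in[0,1]^m$ intended to encode the characteristic vectors of the chain $H^1(y)\subset\cdots\subset H^m(y)$, imposes $x^j\le h^j$ together with $x^j\in P(\mathcal{F})$, and ties $h$ to $y$ through an explicit $O(m^2)$ linear system $Z$ consisting of monotonicity $h^j\le h^{j+1}$, row sums $\sum_i h_i^j=j$, and the key cumulative inequality $\sum_{k\le j}h_i^k\ge j-y_i+1$. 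The technical lemma (Proposition~2.2 in the paper) is that whenever $y$ is a \emph{vertex} of $P(\Pi)$, this system forces $h^j=\chi_{H^j(y)}$ uniquely; the proof is a short induction on $j$. Exactness then follows not by decomposing a fractional optimum into integer tuples, but by passing to an extreme-point optimum of the joint LP, asserting that there $y$ is itself a vertex of $P(\Pi)$, and invoking the lemma. So the ``chain-aware linking inequalities'' you allude to are precisely the system $Z$, and the integrality mechanism is a rigidity argument at permutahedron vertices rather than a convex-combination decomposition; your proposal has the right skeleton but is missing exactly this ingredient.
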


As a motivational example, consider a water company that has decided to expand its network with a set of water lines.
The company cannot build the water lines all at once, but it starts reaping benefits from a partial network expansion (e.g., increasing the amount of water transmitted from one point to another).
The goal is to find an ordering in which to build the water lines so that the cumulative benefit reaped over time, including during the construction phase, is maximized\textemdash we formalize this example shortly.
This type of permutatorial consideration is important in the strategic planning of many sorts of infrastructure projects, where phased construction spanning multiple years is often unavoidable.
It is moreover timely in the United States with the recently enacted infrastructure bill and its funding provisions for transportation, broadband, water, and power infrastructure projects~\cite{bill}.

Our technique more generally applies if we replace the combinatorial subproblem (\ref{eq: combinatorial optimization}) by a continuous subproblem of the form
\begin{align}
\label{eq: continuous optimization}
    \max\{F(x): x \in P \},
\end{align}
where $P \subseteq \mathbb{R}^{n}$ is a polyhedral feasible region and $F: \mathbb{R}^{n} \rightarrow \mathbb{R}$ is the objective function.
In this case, the corresponding permutatorial problem is
\begin{align}
\label{eq: permutatorial optimization - continuous}
    \max\{g(\pi) : \pi \in \Pi\},
\end{align}
where $g: \Pi \rightarrow \mathbb{R}$ is the permutation function given by
\begin{align*}
    g(\pi) := \sum_{j=1}^m \max\{F(x): x \in P(\chi_{H^j(\pi)}) \},
\end{align*}
where $\chi_{H^j(\pi)} \in \{0,1\}^m$ is the characteristic vector corresponding to $H^j(\pi)$ and $P(\chi_{H^j(\pi)})$ is a linear ``restriction'' of $P$ to $\chi_{H^j(\pi)}$.
The precise form of this ``restriction'' is problem-specific, but in any case it ensures that $x \in P(\chi_{H^j(\pi)})$ may only use elements realized by the $j$th step and is still subject to $P$.
In particular, if $P \subseteq [0,1]^n$ and $n = m$, then $P(\chi_{H^j(\pi)}) := \{x \in \mathbb{R}^m: x \leq \chi_{H^j(\pi)} \land x \in P \}$ is a natural choice.
See two concrete examples in Section~\ref{sec: algorithm for additive/linear functions}.
We show that, for $F$ a linear function, efficient polyhedral methods for the continuous subproblem (\ref{eq: continuous optimization}) extend to the permutatorial problem (\ref{eq: permutatorial optimization - continuous}).
We in turn use this to prove our main result.

\subsection{Greedy Can Fail}
\label{sec: greedy can fail}

To motivate our focus on polyhedral methods, consider the performance of two natural greedy algorithms for~(\ref{eq: permutatorial optimization - combinatorial}).
The most natural greedy algorithm maintains a set $H$ of realized elements and, on every step, brings $e^* := \argmax_{e \in E \setminus H} \{\max\{f(S): S \in \mF(H + e) \} - \max\{f(S): S \in \mF(H) \}\}$ into $H$.
Another natural greedy algorithm first finds $S^* := \argmax \{f(S): S \in \mF\}$ and greedily brings its elements into $H$.
It then brings any elements remaining in $E \setminus S^*$ into $H$.
As we showcase in the following examples, neither algorithm is optimal, even when $f$ is an additive set function.

\begin{example}[Matchings]
\label{example: matchings}
Consider the following weighted graphs with $\epsilon > 0$ small.
\begin{center}
\begin{minipage}{.19\linewidth}
\resizebox{\linewidth}{!}{
    \centering
    \begin{tikzpicture}
    \Text[x=1.5, y=-2]{$G_1$}
    \Vertex[x=0,y=1.25,label=1]{1}
    \Vertex[x=3,y=1.25,label=2]{2}
    \Vertex[x=0,y=-1.25,label=3]{3}
    \Vertex[x=3,y=-1.25,label=4]{4}
    \Edge[Math,label=1, fontscale=4/3,style={dashed}](1)(2)
    \Edge[Math, label=2-\epsilon, fontscale=4/3,style={dashed}](2)(3)
    \Edge[Math, label=1, fontscale=4/3,style={dashed}](3)(4)
    \end{tikzpicture}
}
\end{minipage}
\hspace{2cm}
\begin{minipage}{.19\linewidth}
\resizebox{\linewidth}{!}{
    \centering
    \begin{tikzpicture}
    \Text[x=1.5, y=-2]{$G_2$}
    \Vertex[x=0,y=1.25,label=1]{1}
    \Vertex[x=3,y=1.25,label=2]{2}
    \Vertex[x=0,y=-1.25,label=3]{3}
    \Vertex[x=3,y=-1.25,label=4]{4}
    \Edge[Math,label=1+\epsilon, fontscale=4/3,style={dashed}](1)(2)
    \Edge[Math, label=1, distance=0.25, fontscale=4/3,style={dashed}](2)(3)
    \Edge[Math, label=\epsilon, fontscale=4/3,style={dashed}](3)(4)
    \Edge[Math, label=1, distance=0.75, fontscale=4/3,style={dashed}](1)(4)
\end{tikzpicture}
}
\end{minipage}
\end{center}
Suppose the dashed edges are realized one at time. 
The problem is to find an ordering in which to realize the dashed edges so that the sum of the weights of the maximum weight matchings over the steps, each of which may only use the edges realized by their corresponding step, is maximized.
Note that this problem is captured by~(\ref{eq: permutatorial optimization - combinatorial}): $\mF$ represents the set of matchings in $G$, $f(S)$ represents the sum of the weights of the edges in $S$, and $\mF(H^j(\pi))$ represents the set of matchings in $G$ when only the edges in $H^j(\pi)$ can be used.

On $G_1$, the first algorithm is optimal, realizing the edges $\{2,3\}$, $\{1,2\}$, and $\{3,4\}$ in that order for a cumulative weight of $6 - 2 \epsilon$.
Conversely, the second algorithm is suboptimal, realizing the edges $\{1,2\}$, $\{3,4\}$, and $\{2,3\}$ in that order for a cumulative weight of $5$.
Therefore, the second greedy algorithm cannot achieve an approximation factor better than $5/6$ for this problem.
On $G_2$, the first algorithm is suboptimal, realizing the edges $\{1,2\}$, $\{3,4\}$, $\{1,4\}$ and $\{2,3\}$ in that order for a cumulative weight of $5 + 5\epsilon$.
Conversely, the second algorithm is optimal, realizing the edges $\{1,4\}$, $\{2,3\}$, $\{1,2\}$, and $\{3,4\}$ in that order for a cumulative weight of $7$.
Therefore, the first greedy algorithm cannot achieve an approximation factor better than $5/7$ for this problem.
$G_1$ and $G_2$ can be combined to produce an instance in which neither algorithm is optimal (e.g., $G_1 \cup G_2)$.
\end{example}
We note that if $\mF = 2^E$ (i.e., unconstrained optimization) and $f$ is monotone submodular, then $E = \argmax \{f(S): S \in \mF\}$ and so the two greedy algorithms are equivalent.
In this special case, the greedy algorithm can be shown to be a $1/e$-approximation algorithm to (\ref{eq: permutatorial optimization - combinatorial}) using standard techniques.
We show this in Section~\ref{sec: conclusion}.
Next, we formalize our motivational example and consider the performance of the analogous greedy algorithms for (\ref{eq: permutatorial optimization - continuous}).
\begin{example}[Flows]
\label{example: flows}
Let $D = (V,A)$ be a directed graph with capacities $c: A \rightarrow \mathbb{Q}_{\geq 0}$.
The arcs represent, for example, capacitated water lines planned for construction.
Let $s, t \in V$ be distinct nodes.
Suppose a water company wants to maximize the amount of water transmitted from $s$ to $t$ over time, including during the construction phase.
Consider the problem on the following capacitated directed graphs with $\epsilon > 0$ small, which we adapt from Example~\ref{example: matchings}. 
Unlabeled arcs are uncapacitated.
\begin{center}
\begin{minipage}{.375\linewidth}
\resizebox{\linewidth}{!}{
    \centering
    \begin{tikzpicture}
    \Text[x=1.5, y=-2]{$D_1$}
    \Vertex[x=-3, y=0, label=s]{s}
    \Vertex[x=-1.5, y=0, label=0]{0}
    \Vertex[x=0, y=1.25, label=1]{1}
    \Vertex[x=3, y=1.25, label=2]{2}
    \Vertex[x=0, y=-1.25, label=3]{3}
    \Vertex[x=3, y=-1.25, label=4]{4}
    \Vertex[x=4.5, y=0, label=t]{t}
    \Edge[Direct, Math, label=2, distance=1/3, fontscale=4/3](s)(0)
    \Edge[Direct, Math, fontscale=4/3](0)(1)
    \Edge[Direct, Math, fontscale=4/3](0)(3)
    \Edge[Direct, Math,label=1, fontscale=4/3, style={dashed}](1)(2)
    \Edge[Direct, Math, label=2-\epsilon, fontscale=4/3, style={dashed}](3)(2)
    \Edge[Direct, Math, label=1, fontscale=4/3, style={dashed}](3)(4)
    \Edge[Direct, Math, fontscale=4/3](2)(t)
    \Edge[Direct, Math, fontscale=4/3](4)(t)
    \end{tikzpicture}
}
\end{minipage}
\hspace{2cm}
\begin{minipage}{.375\linewidth}
\resizebox{\linewidth}{!}{
    \centering
    \begin{tikzpicture}
    \Text[x=1.5, y=-2]{$D_2$}
    \Vertex[x=-3, y=0, label=s]{s}
    \Vertex[x=-1.5, y=0, label=0]{0}
    \Vertex[x=0, y=1.25, label=1]{1}
    \Vertex[x=3, y=1.25, label=2]{2}
    \Vertex[x=0, y=-1.25, label=3]{3}
    \Vertex[x=3, y=-1.25, label=4]{4}
    \Vertex[x=4.5, y=0, label=t]{t}
    \Edge[Direct, Math, label=2, distance=1/3, fontscale=4/3](s)(0)
    \Edge[Direct, Math, label=1+\epsilon,  fontscale=4/3](0)(1)
    \Edge[Direct, Math, fontscale=4/3](0)(3)
    \Edge[Direct, Math, label=1+\epsilon, fontscale=4/3, style={dashed}](1)(2)
    \Edge[Direct, Math, label=1, distance=0.75, fontscale=4/3, style={dashed}](3)(2)
    \Edge[Direct, Math, label=\epsilon, fontscale=4/3, style={dashed}](3)(4)
    \Edge[Direct, Math, label=1, distance=0.75, fontscale=4/3, style={dashed}](1)(4)
    \Edge[Direct, Math, label=1+\epsilon, fontscale=4/3](2)(t)
    \Edge[Direct, Math, fontscale=4/3](4)(t)
    \end{tikzpicture}
}
\end{minipage}
\end{center}
Suppose the dashed arcs are realized one at time. 
The problem is to find an ordering in which to realize the dashed arcs so that the sum of the maximum flows from $s$ to $t$ over the steps, each of which may only use the arcs realized by their corresponding step, is maximized.
Note that this problem is captured by~(\ref{eq: permutatorial optimization - continuous}): $P$ represents the $s-t$ flow polytope of $D$, $F(x)$ represents the size of the flow $x$, and $P(\chi_{H^j(\pi)})$ represents the $s-t$ flow polytope of $D$ when only the edges in $H^j(\pi)$ can be used. 

On $D_1$, the first algorithm is optimal and achieves a cumulative flow of $6 - \epsilon$, whereas the second algorithm is suboptimal and achieves a cumulative flow of $5$.
On $D_2$, the first algorithm is suboptimal and achieves a cumulative flow of $5 + 5\epsilon$, whereas the second algorithm is optimal and achieves a cumulative flow of $7$.
$D_1$ and $D_2$ can be combined to produce an instance in which neither algorithm is optimal.

In fact, there are instances of this problem in which the approximation factor of either greedy algorithm is $\Omega(|V|)$, as we showcase with the capacitated directed graph $D_3$ below.
\begin{center}
\begin{minipage}{.7\linewidth}
\resizebox{\linewidth}{!}{
    \centering
    \begin{tikzpicture}
    \Text[x=0, y=-1]{$D_3$}
    \Vertex[x=-8, y=0,label=s]{s}
    \Vertex[x=-6, y=0,label=0]{0}
    \Vertex[x=-4, y=0, label=1]{1}
    \Vertex[x=-2, y=0, label=2]{2}
    \Vertex[x=0, y=0, label=3]{3}
    \Vertex[x=2, y=0, label=4]{4}
    \Vertex[x=4, y=0, label=5]{5}
    \Vertex[x=6, y=0,label=t]{t}
    \Vertex[x=0, y=1.5, label=6]{6}
    \Edge[Direct, Math, label=1, fontscale=4/3](s)(0)
    \Edge[Direct, Math, label=1, fontscale=4/3, style={dashed}](0)(1)
    \Edge[Direct, Math, label=1, fontscale=4/3, style={dashed}](1)(2)
    \Edge[Direct, Math, label=1, fontscale=4/3, style={dashed}](2)(3)
    \Edge[Direct, Math, label=1, fontscale=4/3, style={dashed}](3)(4)
    \Edge[Direct, Math, label=1, fontscale=4/3, style={dashed}](4)(5)
    \Edge[Direct, Math, label=1, fontscale=4/3, style={dashed}](5)(t)
    \Edge[Direct, Math, label=1-\epsilon, fontscale=4/3, style={dashed}](0)(6)
    \Edge[Direct, Math, label=1-\epsilon, fontscale=4/3, style={dashed}](6)(t)
    \end{tikzpicture}
}
\end{minipage}
\end{center}
The first algorithm can be unlucky and realize arcs allowing zero flow from $s$ to $t$ for the first $5$ steps and unit flow for only the last $3$ steps, whereas it is possible to have zero flow for only the first step, $1-\epsilon$ flow for the next $6$ steps, and unit flow for the last step.
The second algorithm behaves identically if in the first step it picks the maximum flow supported solely along the long path in $D_3$.
The long path in $D_3$ can be extended with any number of nodes to obtain the $\Omega(|V|)$ bound.
\end{example}

\subsection{Organization}
In Section~\ref{sec: general technique} we describe our general technique.
In Section~\ref{sec: algorithm for additive/linear functions} we show its exactness for the case of $f$ additive/$F$ linear, and we revisit our examples.
Lastly, in Section~\ref{sec: conclusion} we give concluding remarks.

\section{General Technique}
\label{sec: general technique}

For $a, b \in \mathbb{R}$ with $a \leq b$ denote $[a,b] := \{x \in \mathbb{R}: a \leq x \leq b\}$.
Let $P(\Pi) \subseteq [1, m]^m$ be the convex hull of permutations of $E$ (a.k.a., the permutahedron).
The following is well-known (see also Billera and Sarangarajan~\cite{billera1994combinatorics}, for example).
\begin{proposition}[Rado~\cite{rado1952inequality}]
$P(\Pi)$ has a linear inequality description given by
\begin{align*}
    P(\Pi) 
    = 
    \left\{
    y \in [1,m]^m : 
    \begin{array}{rlc}
              \sum_{i \in E} y_i &= \binom{m + 1}{2} &  \\
              \sum_{i \in S} y_i   &\leq \binom{m+1}{2} - \binom{m+1-|S|}{2}, & \forall S \subseteq E: S \neq \emptyset
    \end{array}
    \right\}.
\end{align*}
\end{proposition}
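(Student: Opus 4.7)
The plan is to establish the two inclusions separately. For the forward direction, verify directly that every permutation vector $y = (\pi(1), \ldots, \pi(m))$ satisfies all listed constraints. The equation $\sum_{i \in E} y_i = \binom{m+1}{2}$ and the bounds $y_i \in [1, m]$ are immediate. For the subset inequalities, observe that $\sum_{i \in S} y_i$ is maximized over permutations when $\pi$ sends $S$ onto the top $|S|$ values, giving
\[
\sum_{j = m - |S| + 1}^{m} j = \binom{m+1}{2} - \binom{m+1-|S|}{2}.
\]

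For the reverse direction, the plan is to recognize the right-hand set as the base polytope of a submodular function and invoke Edmonds' theorem. Define $\rho: 2^E \to \mathbb{Z}_{\geq 0}$ by $\rho(\emptyset) = 0$ and $\rho(S) = \binom{m+1}{2} - \binom{m+1-|S|}{2}$ otherwise. Since $\rho(S)$ depends only on $|S|$ and its increments $\rho(k+1) - \rho(k) = m - k$ are nonincreasing in $k$, the function $\rho$ is nondecreasing and submodular. The box constraints $1 \leq y_i \leq m$ are redundant: the singleton inequality gives $y_i \leq \rho(\{i\}) = m$, and the inequality for $S = E \setminus \{i\}$ combined with the sum equality gives $y_i \geq \binom{m+1}{2} - \rho(E \setminus \{i\}) = 1$.

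By Edmonds' characterization, every extreme point of the base polytope $\{y \in \mathbb{R}^E : \sum_{i \in S} y_i \leq \rho(S) \ \forall\, S \subseteq E, \ \sum_{i \in E} y_i = \rho(E)\}$ arises from the greedy algorithm, and every greedy choice yields an extreme point. For each ordering $i_1, \ldots, i_m$ of $E$, the greedy vertex satisfies
\[
y_{i_k} = \rho(\{i_1, \ldots, i_k\}) - \rho(\{i_1, \ldots, i_{k-1}\}) = m + 1 - k,
\]
which is the vector associated with the permutation $i_k \mapsto m+1-k$. As the ordering ranges over all bijections, we recover precisely the permutation vectors of $E$. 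Hence the extreme points of the right-hand polytope coincide with those of $P(\Pi)$, and by taking convex hulls the two polytopes agree.

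The main obstacle is to correctly identify the submodular structure and justify redundancy of the box constraints; once this is in place, Edmonds' theorem supplies the remaining content and the greedy computation is routine. An alternative route would be a direct vertex-by-vertex exchange argument, but the polymatroid viewpoint is cleaner and dovetails naturally with the polyhedral perspective used throughout the paper.
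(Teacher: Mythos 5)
The paper states this proposition without proof, citing Rado (1952) and pointing to Billera--Sarangarajan for exposition, so there is no paper proof against which to compare your argument. Your polymatroid-based proof is correct and is essentially the standard textbook route to Rado's theorem: the forward inclusion is a direct verification that permutation vectors satisfy the constraints (with the subset bound achieved by putting $S$ on the top $|S|$ ranks), and the reverse inclusion identifies the right-hand side as the base polytope $B(\rho)$ of the cardinality-based submodular function $\rho(S)=\binom{m+1}{2}-\binom{m+1-|S|}{2}$, deduces $1\le y_i\le m$ from the singleton and complement inequalities, and then uses Edmonds' greedy characterization of vertices of $B(\rho)$ to show every vertex is the permutation vector $y_{i_k}=m+1-k$ for some ordering $i_1,\dots,i_m$. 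Your increment computation $\rho(k+1)-\rho(k)=m-k$ is correct, the nonincreasing increments establish submodularity, and boundedness of the right-hand side (from the box constraints you derive) justifies passing from extreme points to the full polytope by taking convex hulls. A minor point worth stating explicitly is that the base polytope is bounded here, so it equals the convex hull of its vertices; you implicitly rely on this when concluding. This argument fits naturally alongside the paper's later use of the submodularity of $\rho$ for the separation oracle.
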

Since $\binom{m+1}{2} - \binom{m+1-|S|}{2} - \sum_{i \in S} y_i$ is a submodular set function, this description has a strongly polynomial time separation oracle~\cite{schrijver2000combinatorial} (a set function $f: 2^E \rightarrow \mathbb{R}$ is said to be submodular if $f(S) + f(T) \geq f(S \cup T) + f(S \cap T)$ for every $S, T \subseteq E$).
Alternatively, Goemans~\cite{goemans2015smallest} gives a an extended formulation with $\Theta(m \log m)$ extension complexity using the sorting network of Ajtai, Koml{\'o}s, and Szemer{\'e}di~\cite{ajtai1983sorting}.

We give a set of $O(m^2)$ linear inequalities that, given an extreme point $y \in P(\Pi)$, produce a sequence $h^1, h^2, \ldots, h^m \in [0,1]^m$ of characteristic vectors corresponding to the chain $\emptyset = H^0(y) \subset H^1(y) \subset H^2(y) \subset \cdots \subset H^m(y) = E$ of subsets of $E$.
Let $h_i^j$ denote the $i$th entry of the $j$th vector.
\begin{proposition}
\label{proposition: transformation}
Let $y \in P(\Pi)$ be an extreme point.
Then, $h^1, h^2, \ldots, h^m \in [0,1]^E$ is a sequence of characteristic vectors corresponding to the chain $\emptyset = H^0(y) \subset H^1(y) \subset H^2(y) \subset \cdots \subset H^m(y) = E$ if and only if the following inequalities are satisfied:
\begin{subequations}
\begin{align}
    && h_i^0                    &= 0,            & \forall i \in [m] \label{eq: transformation 0} \\
    && h_i^{j}                  &\leq h_i^{j+1},   & \forall j \in [m - 1], i \in [m] \label{eq: transformation 1} \\
    && \sum_{i \in [m]} h_i^{j} &=    j,           & \forall j \in [m] \label{eq: transformation 2} \\
    && \sum_{k \in [j]} h_i^{k} &\geq j - y_i + 1, & \forall j \in [m], \forall i \in [m] \label{eq: transformation 3}
\end{align}
\end{subequations}
\end{proposition}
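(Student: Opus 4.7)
My plan is to prove the two directions of the biconditional separately, leveraging the fact that an extreme point $y \in P(\Pi)$ is a permutation of $(1, 2, \ldots, m)$, so that $\chi_{H^j(y)}$ is simply the indicator $(i \mapsto \mathbbm{1}[y_i \leq j])$. For the forward direction I would substitute $h_i^j = \mathbbm{1}[y_i \leq j]$ into (\ref{eq: transformation 0})--(\ref{eq: transformation 3}) and verify each inequality: (\ref{eq: transformation 0}) follows from $y_i \geq 1$, (\ref{eq: transformation 1}) from monotonicity of the indicator in $j$, (\ref{eq: transformation 2}) from $|H^j(y)| = j$ since $y$ is a bijection onto $[m]$, and (\ref{eq: transformation 3}) from the telescoping identity $\sum_{k=1}^j \mathbbm{1}[y_i \leq k] = (j - y_i + 1)^+$.

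The reverse direction is where the real content lies, and I would handle it via a tight global counting argument. Summing the left-hand side of (\ref{eq: transformation 3}) over all $i, j \in [m]$ and swapping the order of summation so that each $h_i^k$ is counted $m - k + 1$ times, constraint (\ref{eq: transformation 2}) collapses the total to $\sum_{k=1}^m k(m - k + 1) = \binom{m+2}{3}$. Partition the index pairs into \emph{active} ($y_i \leq j$) and \emph{inactive} ($y_i > j$); on active pairs (\ref{eq: transformation 3}) lower-bounds the LHS by the positive quantity $j - y_i + 1$, and summing these bounds over the permutation $y$ gives $\sum_i \binom{m - y_i + 2}{2}$, which also evaluates to $\binom{m+2}{3}$ by a hockey-stick identity. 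Combining the trivial bound $\sum_{k=1}^j h_i^k \geq 0$ on inactive pairs with the active bounds shows that the overall LHS sum is at least $\binom{m+2}{3}$; matching the exact value forces every active constraint to be tight and every inactive LHS to vanish.

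From this rigidity the conclusion is mechanical: for each $i$ the partial sums $S_j^i := \sum_{k=1}^j h_i^k$ must equal $0$ for $j < y_i$ and $j - y_i + 1$ for $j \geq y_i$, so differencing consecutive terms yields $h_i^j = \mathbbm{1}[y_i \leq j]$ for every $j$, i.e., $h^j = \chi_{H^j(y)}$ as required. The main obstacle is really the double-counting step: once the two aggregate sums are shown to agree, the equality case pins down $h$ completely, but spotting the right aggregates to compare (and confirming they both evaluate to $\binom{m+2}{3}$, one via (\ref{eq: transformation 2}) and the other via the hockey-stick identity applied to the permutation $y$) is the crux of the proof.
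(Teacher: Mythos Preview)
Your proof is correct, and the forward direction (substituting $h_i^j=\mathbbm{1}[y_i\le j]$ and verifying (\ref{eq: transformation 0})--(\ref{eq: transformation 3})) is essentially the same as the paper's. The reverse direction, however, is handled quite differently. The paper argues by induction on $j$: assuming $h^1,\ldots,h^j$ are already pinned down as the characteristic vectors of $H^1(y),\ldots,H^j(y)$, it applies (\ref{eq: transformation 3}) with the single index $i$ satisfying $y_i=j+1$ to force $h_i^{j+1}\ge 1$, and then (\ref{eq: transformation 1}) and (\ref{eq: transformation 2}) leave no slack for the remaining coordinates of $h^{j+1}$. Your argument instead aggregates all of (\ref{eq: transformation 3}) at once: summing the left sides over all $(i,j)$ and invoking (\ref{eq: transformation 2}) gives exactly $\sum_{k}k(m-k+1)=\binom{m+2}{3}$, while the hockey-stick identity applied to the permutation $y$ shows the active right sides also sum to $\binom{m+2}{3}$; nonnegativity of the inactive left sides then forces every inequality to be tight, and differencing recovers $h_i^j=\mathbbm{1}[y_i\le j]$. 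The paper's induction is lighter on computation and never needs a closed-form identity, while your double-counting is more global and exposes the single combinatorial equality that makes the linear system rigid. Both are short; yours leans on the hypothesis $h^j\in[0,1]^E$ (for the ``inactive LHS $\ge 0$'' step), whereas the paper's induction uses it only implicitly through (\ref{eq: transformation 0})--(\ref{eq: transformation 1}).
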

\begin{proof}
Assume without loss of generality (up to rearranging) that $y_i = i$ for all $i \in [m]$.
 
First, suppose the inequalities are satisfied.
Consider the case in which $j = 1$.
(\ref{eq: transformation 3}) with $i = 1$ requires $h_1^1 \geq j - y_1 + 1 = 1 - 1 + 1 = 1$.
(\ref{eq: transformation 0}) and (\ref{eq: transformation 1}) further require $h_i^1 \geq 0$ for $i > 1$.
(\ref{eq: transformation 2}) requires $\sum_{i \in [m]} h_i^1 = 1$, and so $h_i^1 = 1$ for $i = 1$ and $h_i^1 = 0$ for $i > 1$ is the only candidate solution remaining.
Note that $h^1$ is the characteristic vector of $H^1(y)$.

By way of induction, suppose $h^j$ is the characteristic vector of $H^j(y)$ for some $1 \leq j < m$.
(\ref{eq: transformation 1}) and induction require $h_i^{j+1} \geq 1$ for $1 \leq i \leq j$ and $h_i^{j+1} \geq 0$ for $i > j$.
(\ref{eq: transformation 3}) with $i=j+1$ requires
\begin{align*}
    h_{j+1}^{j+1} = h_{j+1}^{1} + h_{j+1}^{2} + \cdots + h_{j+1}^{j} + h_{j+1}^{j+1} 
    \geq j + 1 - y_{j+1} + 1 
    \geq j + 1 - (j + 1) + 1
    = 1,
\end{align*}
where the first equality holds by induction.
(\ref{eq: transformation 2}) requires $\sum_{i \in [m]} h_{i}^{j+1} = j+1$, and so $h_i^{j+1} = 1$ for $1 \leq i \leq j+1$ and $h_i^{j+1} = 0$ for $i > j+1$ is the only candidate solution remaining.
Note that $h^{j+1}$ is the characteristic vector of $H^{j+1}(y)$.

Lastly, we need to show that letting $h^1, h^2, \ldots, h^m$ be characteristic vectors corresponding to the chain $\emptyset = H^0(y) \subset H^1(y) \subset H^2(y) \subset \cdots \subset H^m(y) = E$ indeed satisfies the inequalities.
(\ref{eq: transformation 0})-(\ref{eq: transformation 2}) are clearly satisfied.
It remains to show (\ref{eq: transformation 3}) is satisfied.
Take any $j \in [m]$ and $i \in [m]$.
If $i \geq j + 1$, then $y_i = i \geq j + 1$ and so $j - y_i + 1 \leq j - (j+1) + 1 = 0$ and the inequality is trivially satisfied.
If $i \leq j$, then $h_i^k = 1$ for all $i \leq k \leq j$ and so $\sum_{k \in [j]} h_i^k \geq j - i + 1= j - y_i + 1$.
\end{proof}
For notational convenience, we treat the linear system (\ref{eq: transformation 0})-(\ref{eq: transformation 3}) as a transformation $Z$ that maps $y$ to $h^1, h^2, \ldots, h^m$.
We first consider the case in which our subproblem is continuous of the form (\ref{eq: continuous optimization}).
We use $P(\Pi)$ and $Z$ to obtain
\begin{align}
\label{eq: relaxed permutatorial optimization - continuous}
    \max\{G(y) : y \in P(\Pi)\},
\end{align}
as a continuous relaxation of (\ref{eq: permutatorial optimization - continuous}), where $G: [1,m]^m \rightarrow \mathbb{R}$ is the continuous extension of $g$ given by
\begin{align*}
    G(y) := \sum_{j=1}^m \max\{F(x): Z(y) = h^1, h^2, \ldots, h^m \land x \in P(h^j) \}.
\end{align*}
In this way, the feasible region of (\ref{eq: relaxed permutatorial optimization - continuous}) and the feasible region of each subproblem in $G(y)$ admit (possibly exponential sized) linear inequality descriptions.

We now consider the case in which our subproblem is combinatorial of the form~(\ref{eq: combinatorial optimization}).
Let $P(\mF)$ be the convex hull of characteristic vectors corresponding to $\mF$.
Let $F: [0,1]^m \rightarrow \mathbb{R}$ be a continuous extension of $f$.
For example, if $f$ is additive with coefficients $w \in \mathbb{R}^m$ (meaning $f(S) = \sum_{i \in S} w_i$ for all $S \subseteq E$), we may simply let $F(x) = \sum_{i=1}^m w_i x_i$.
Then we obtain
\begin{align}
\label{eq: relaxed combinatorial optimization}
    \max\{F(x): x \in P(\mF)\}
\end{align}
as a continuous relaxation of (\ref{eq: combinatorial optimization}).
Note that $P(\mF)$ always admits a (possibly exponential sized) linear inequality description, and that when $f$ is additive this relaxation is exact.
Similarly, we use $F$, $P(\Pi)$, and $Z$ to obtain
\begin{align}
\label{eq: relaxed permutatorial optimization - combinatorial}
    \max\{G(y): y \in P(\Pi)\}
\end{align}
as a continuous relaxation of (\ref{eq: permutatorial optimization - combinatorial}), where $G: [1, m]^m \rightarrow \mathbb{R}$ is the continuous extension of $g$ given by
\begin{align*}
    G(y) := \sum_{j=1}^m \max\{F(x): Z(y) = h^1, h^2, \ldots, h^m \land x \leq h^j \land x \in P(\mathcal{F}) \}.
\end{align*}
As before, the feasible region of (\ref{eq: relaxed permutatorial optimization - combinatorial}) and the feasible region of each subproblem in $G(y)$ admit (possibly exponential sized) linear inequality descriptions.

\section{Algorithm for Additive/Linear Functions}
\label{sec: algorithm for additive/linear functions}

In this section we show that for $f$ additive/$F$ linear, the continuous relaxations of (\ref{eq: permutatorial optimization - combinatorial}) and (\ref{eq: permutatorial optimization - continuous}) given in Section~\ref{sec: general technique} are in fact exact.
Moreover, if the combinatorial/continuous subproblem can be solved in polynomial time via linear programming, then so can the corresponding permutatorial problem.

\begin{theorem}
\label{theorem: main - continuous}
If $F$ is linear and linear functions can be optimized over $P$ in polynomial time, then (\ref{eq: permutatorial optimization - continuous}) and (\ref{eq: relaxed permutatorial optimization - continuous}) are equivalent and can be solved in polynomial time via linear programming.
\end{theorem}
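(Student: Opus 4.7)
The plan is to reduce (\ref{eq: relaxed permutatorial optimization - continuous}) to a single linear program solvable in polynomial time, then show its optimum matches that of (\ref{eq: permutatorial optimization - continuous}).

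For the algorithmic piece, the inner maxima defining $G(y)$ collapse together with the outer maximization into one LP in joint variables $(y, h^1, \ldots, h^m, x^1, \ldots, x^m)$, maximizing $\sum_j F(x^j)$ subject to $y \in P(\Pi)$, the $O(m^2)$ explicit transformation inequalities (\ref{eq: transformation 0})-(\ref{eq: transformation 3}), and $x^j \in P(h^j)$ for each $j$. A polynomial-time separation oracle assembles from $P(\Pi)$ (polynomial-time separable via the submodularity noted after the Rado proposition, or via Goemans' extended formulation), the explicit transformation constraints, and $P(h^j)$ (polynomial-time separable by the hypothesis combined with Grötschel-Lovász-Schrijver). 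The ellipsoid method then solves the LP in polynomial time.

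One direction of the equivalence is immediate: any permutation $\pi$ gives a feasible LP point via Proposition~\ref{proposition: transformation} with value $g(\pi)$, so the LP optimum is at least the permutatorial optimum. For the reverse, I would take an LP-optimal $(y^*, h^*, x^*)$ and exhibit a permutation matching its value. The key observation is that the increments $z^j_i := h^{*j}_i - h^{*(j-1)}_i$ satisfy $z^j_i \ge 0$, $\sum_i z^j_i = 1$ (from (\ref{eq: transformation 2})), and $\sum_j z^j_i = h^{*m}_i = 1$, making $z$ a doubly stochastic matrix. Birkhoff--von Neumann then gives $z = \sum_k \lambda_k P_{\pi_k}$ for permutations $\pi_k$, so $h^{*j} = \sum_k \lambda_k \chi_{H^j(\pi_k)}$ at every step $j$. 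The remaining goal is the per-step bound
\[
    F(x^{*j}) \le \sum_k \lambda_k \max\{F(x) : x \in P(\chi_{H^j(\pi_k)})\},
\]
which summed over $j$ yields $\mathrm{OPT}_{\mathrm{LP}} \le \sum_k \lambda_k g(\pi_k) \le \max_k g(\pi_k)$, and the maximum is then attained by some explicit permutation.

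The main obstacle is the displayed per-step inequality. With the natural restriction $P(\chi) = \{x \in P : x \le \chi\}$, it suffices to produce a decomposition $x^{*j} = \sum_k \lambda_k \tilde{x}^j_k$ with $\tilde{x}^j_k \in P(\chi_{H^j(\pi_k)})$, after which linearity of $F$ together with the optimality of the inner subproblems delivers the bound. I would attempt this decomposition by peeling off one permutation at a time from the Birkhoff expansion, allocating the mass of $x^{*j}$ along the coordinates permitted by $\chi_{H^j(\pi_k)}$ while keeping the residual in $P$ and recursing on the remaining doubly stochastic matrix. Verifying that the allocation never leaves $P$ is the crux, and is where the structural features of $P$ arising in the applications of interest (for instance, the downward-monotone feasibility of the flow polytope in Example~\ref{example: flows}) will be used.
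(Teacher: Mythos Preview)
Your LP reformulation and the polynomial-time solvability argument match the paper's. The divergence is in proving equivalence. The paper does not decompose a fractional optimum at all: it observes that the joint LP, being linear, admits an extreme-point optimum, asserts that at such an optimum the $y$-component is itself a vertex of $P(\Pi)$ and hence a permutation, and then invokes Proposition~\ref{proposition: transformation} to conclude that the corresponding $h^j$ are the characteristic vectors $\chi_{H^j(y)}$. The equivalence with (\ref{eq: permutatorial optimization - continuous}) is then immediate, with no Birkhoff step and no need to split $x^{*j}$.

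Your Birkhoff--von~Neumann route has a genuine gap, precisely where you flag it. The per-step inequality does not follow from the stated hypotheses, and the decomposition of $x^{*j}$ you propose need not exist. Take $m=n=2$, $P=\{x\in[0,1]^2:x_1=x_2\}$ with the natural restriction $P(h)=\{x\in P:x\le h\}$, and $F(x)=x_1+x_2$. With $y^*=(\tfrac32,\tfrac32)\in P(\Pi)$ the transformation constraints force $h^{*1}=(\tfrac12,\tfrac12)=\tfrac12\chi_{\{1\}}+\tfrac12\chi_{\{2\}}$, and $x^{*1}=(\tfrac12,\tfrac12)\in P(h^{*1})$ has $F(x^{*1})=1$; but $P(\chi_{\{1\}})=P(\chi_{\{2\}})=\{(0,0)\}$, so both inner maxima are $0$ and the desired inequality reads $1\le 0$. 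No peeling scheme can repair this, since there is nothing in $P(\chi_{\{1\}})$ or $P(\chi_{\{2\}})$ to absorb the mass of $x^{*1}$. The downward-monotonicity you invoke at the end is an additional structural assumption not present in the theorem, so as written your argument does not establish the stated result; you would need something like the paper's extreme-point step rather than an averaging argument.
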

\begin{proof}
Since the terms of the summation in the objective function of (\ref{eq: relaxed permutatorial optimization - continuous}) are decoupled given $y \in P(\Pi)$, we may rewrite (\ref{eq: relaxed permutatorial optimization - continuous}) as
\begin{align}
\label{eq: lp - continuous 1}
    \max \left\{ \sum_{j=1}^m F(x^j) :
    y \in P(\Pi)
    \land Z(y) = h^1, h^2, \ldots, h^m  
    \land x^j \in P(h^j)
    \right\},
\end{align}
which can be solved in polynomial time since each set of inequalities describing the feasible region has either a compact description or a polynomial time separation oracle (in the case of $x^j \in P(h^j)$, we use the equivalence of separation and optimization and the assumption that linear functions can be optimized over $P$ in polynomial time).
Since the objective function and constraints of (\ref{eq: lp - continuous 1}) are linear, there exists an extreme point optimal solution.
Since in such a solution $y$ is an extreme point of $P(\Pi)$, Proposition~\ref{proposition: transformation} implies (\ref{eq: lp - continuous 1}) and 
\begin{align}
\label{eq: lp - continuous 2}
    \max \left\{ \sum_{j=1}^m F(x^j) :
    \pi \in \Pi
    \land x^j \in P(\chi_{H^j(\pi)})
    \right\}
\end{align}
are equivalent.
Lastly, note that (\ref{eq: lp - continuous 2}) and (\ref{eq: permutatorial optimization - continuous}) are equivalent since the terms of the summation in the objective function of (\ref{eq: permutatorial optimization - continuous}) are decoupled given $\pi \in \Pi$.
\end{proof}
We now prove our main result Theorem~\ref{theorem: main - combinatorial}: if $f$ is additive and linear functions can be optimized over $P(\mF)$ in polynomial time, then (\ref{eq: permutatorial optimization - combinatorial}) and (\ref{eq: relaxed permutatorial optimization - combinatorial}) are equivalent and can be solved in polynomial time via linear programming.
\begin{proof}[Proof~of~Theorem~\ref{theorem: main - combinatorial}]
Since $f$ is additive (say, with coefficients $w \in \mathbb{R}^m$), we may simply let $F(x) = \sum_{i=1}^m w_i x_i$, which is linear.
Then, (\ref{eq: relaxed permutatorial optimization - combinatorial}) is the special case of (\ref{eq: relaxed permutatorial optimization - continuous}) in which $F$ is linear and $P(h^j) = \{x \in \mathbb{R}^m : x \leq h^j \land x \in P(\mF)\}$.
Moreover, by the assumption that linear functions can be optimized over $P(\mF)$ in polynomial time, all conditions of Theorem~\ref{theorem: main - continuous} are met.
Therefore, (\ref{eq: relaxed permutatorial optimization - combinatorial}) is equivalent to the special case of (\ref{eq: permutatorial optimization - continuous}) in which $F$ is linear and $P(\chi_{H^j(\pi)}) = \{x \in \mathbb{R}^m : x \leq \chi_{H^j(\pi)} \land x \in P(\mF)\}$, and moreover can be solved in polynomial time via linear programming.
Lastly, note that since $f$ is additive, $\max\{F(x) : x \leq \chi_{H^j(\pi)} \land x \in P(\mF) \}$ is equivalent to $\max\{f(S) : S \in \mF(H^j(\pi))\}$, and so (\ref{eq: permutatorial optimization - continuous}) and (\ref{eq: permutatorial optimization - combinatorial}) are equivalent.
\end{proof}
\begin{remark}
More broadly, our proofs show that, for $f$ additive/$F$ linear, the time complexity of solving the combinatorial/continuous problem via the polyhedral approach (be it polynomial or not) extends to solving  the corresponding permutatorial problem via the polyhedral approach at the expense of $O(m^2)$ additional variables and linear inequalities.
For example, our technique can be incorporated within an integer linear programming framework.
\end{remark}

Our result extends, for example, to minimizing linear functions with non-negative coefficients over the dominant $\text{dom}(P(\mF))$ of $P(\mF)$, provided it has a compact description or a polynomial time separation oracle.
The dominant $\text{dom}(P) := \{x + y: x \in P, y \in \mathbb{R}_{\geq 0}^m\}$ of a polytope $P \subset \mathbb{R}^m$ is often used in optimization since it may have a simpler description than $P$ and since minimizing non-negative linear functions over $\text{dom}(P)$ is equivalent to minimizing non-negative linear functions over $P$.

We showcase our framework by revisiting the examples from Section~\ref{sec: introduction}.
\begin{example}[Matchings, cont.]
Let $G = (V, E)$ be a bipartite graph with weights $w: E \rightarrow \mathbb{Q}_{\geq 0}$.
Let $m = |E|$ so that we treat the edge set as $E = [m]$.
Then, by Theorem~\ref{theorem: main - combinatorial}, solving
\begin{subequations}
\begin{align}
    \maximize~&& \sum_{j=1}^m \sum_{i \in E} w_i x_i^j & & \\ 
              && y &\in P(\Pi) &  \\
              && h^1, h^2, \ldots, h^m  &= Z(y), &   \\
              && x^j  &\leq h^j, &  \forall j \in [m] \\
              && \sum_{i \in \delta(v)} x_i^j &\leq 1, & \forall j \in [m], \forall v \in V \label{eq: bipartite} \\
              && x^j &\geq 0, & \forall j \in [m]
\end{align}
\end{subequations}
is equivalent to solving (\ref{eq: permutatorial optimization - combinatorial}). 
If $G$ were not bipartite, we could replace (\ref{eq: bipartite}) with Edmonds'~\cite{edmonds1965maximum} description of the matching polytope.
\end{example}
\begin{example}[Flows, cont.]
\label{ex: flows, cont.}
Let $m = |A|$ so that we treat the arc set as $A = [m]$.
Then, by Theorem~\ref{theorem: main - continuous}, solving
\begin{subequations}
\begin{align}
    \maximize~&& \sum_{j=1}^m \sum_{i \in \delta^+(s)} f_u^j & & \\ 
              && y &\in P(\Pi) &  \\
              && h^1, h^2, \ldots, h^m  &= Z(y), &   \\
              && x^j  &= h^j, &  \forall j \in [m] \\
              && \sum_{i \in \delta^+(u)} f_i^j &= \sum_{i \in \delta^-(u)} f_i^j, & \forall j \in [m], \forall u \in V \setminus \{s,t\} \label{eq: mass conservation} \\
              && f_i^j &\leq c_i x_i^j, & \forall j \in [m], \forall i \in A \label{eq: capacity} \\
              && x^j, f^j &\geq 0, & \forall j \in [m]
\end{align}
\end{subequations}
is equivalent to solving (\ref{eq: permutatorial optimization - continuous}). 
For $u \in V$, $\delta^-(u)$ and $\delta^+(u)$ denote its incoming and outgoing arcs. 
\end{example}

\section{Conclusion}
\label{sec: conclusion}

We do not know of ``combinatorial'' algorithms for instances of (\ref{eq: permutatorial optimization - combinatorial}) or (\ref{eq: permutatorial optimization - continuous}) that can be solved in polynomial time via our polyhedral methods (e.g., matchings, flows).
We do not know how or if approximation algorithms for NP-hard instances of (\ref{eq: combinatorial optimization}) can be leveraged to obtain approximation algorithms for the corresponding instances of (\ref{eq: permutatorial optimization - combinatorial}).
As showcased in our examples, it not even clear how to leverage optimal solutions.
On a similar vein, we do not know how or if optimization methods for other classes of set functions $f$ (e.g., submodular) extend from (\ref{eq: combinatorial optimization}) to (\ref{eq: permutatorial optimization - combinatorial}).
If $\mF = 2^E$ (i.e., unconstrained optimization) and $f$ is monotone submodular, the greedy algorithm can be shown to be a $1/e$-approximation algorithm for (\ref{eq: permutatorial optimization - combinatorial}) using techniques from the greedy algorithm analysis of Nemhauser, Wolsey, and Fisher~\cite{nemhauser1978analysis}.
Contrast this guarantee with the much worse approximation ratio exhibited by the greedy algorithms on the third instance in Example~\ref{example: flows}.
We do not know whether $1/e$ is tight\textemdash contrast this with the tight $(1-1/e)$-approximation to $(\ref{eq: combinatorial optimization})$ achieved by the greedy algorithm when $f$ is monotone submodular and $\mF$ are the independent sets of a uniform matroid~\cite{nemhauser1978analysis}.
\begin{proposition}
If $\mF = 2^E$ and $f$ is monotone submodular, the greedy algorithm is a $1/e$-approximation algorithm to (\ref{eq: permutatorial optimization - combinatorial}).
\end{proposition}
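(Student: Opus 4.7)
The plan is to decouple the permutatorial objective into a sum of cardinality-constrained monotone submodular maximization problems, and then apply the classical greedy analysis of Nemhauser, Wolsey, and Fisher~\cite{nemhauser1978analysis} per step.

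First, since $\mF = 2^E$ and $f$ is monotone, the inner maximization simplifies: $\max\{f(S) : S \in \mF(H^j(\pi))\} = f(H^j(\pi))$. Hence $g(\pi) = \sum_{j=1}^m f(H^j(\pi))$, and the problem reduces to choosing a maximal chain $\emptyset = H_0 \subset H_1 \subset \cdots \subset H_m = E$ with $|H_j| = j$ that maximizes $\sum_{j=1}^m f(H_j)$. The greedy algorithm of Section~\ref{sec: greedy can fail} builds such a chain by adding, at step $j$, the element of largest marginal gain $f(H_{j-1} + e) - f(H_{j-1})$. The key observation is that the greedy prefix $H_j$ coincides with the output of the NWF greedy algorithm applied to the cardinality-$j$ problem $\max\{f(S) : |S| \leq j\}$, since the selection rule depends only on the current set and not on the total number of steps remaining; in particular, the NWF bound applies to every prefix, not just the final iterate.

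Next, letting $\OPT_j := \max\{f(S) : |S| \leq j\}$, the NWF theorem gives
\[
f(H_j) \geq \bigl(1 - (1 - 1/j)^j\bigr)\, \OPT_j \geq (1 - 1/e)\, \OPT_j,
\]
using the elementary bound $(1 - 1/j)^j \leq 1/e$ valid for every $j \geq 1$. For any optimal ordering $\pi^*$, the prefix $H^j(\pi^*)$ has cardinality $j$, and so $f(H^j(\pi^*)) \leq \OPT_j$. Summing over $j \in [m]$ yields
\[
g(\pi_{\mathrm{greedy}}) \geq (1 - 1/e) \sum_{j=1}^m \OPT_j \geq (1 - 1/e) \sum_{j=1}^m f(H^j(\pi^*)) = (1 - 1/e)\, g(\pi^*),
\]
which in fact achieves the stronger $(1 - 1/e)$-approximation and immediately implies the claimed $1/e$-approximation since $1 - 1/e > 1/e$. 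I do not anticipate any substantive obstacle: the only point worth verifying is that the NWF bound can be applied to each greedy prefix independently, which is immediate from the online character of the greedy rule (its $j$-th choice is a function of $H_{j-1}$ alone).
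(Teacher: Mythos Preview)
Your argument is correct, and in fact sharper than the paper's. Both proofs rest on the Nemhauser--Wolsey--Fisher recurrence, but they deploy it differently. The paper compares every greedy prefix to the single global optimum $f(E)$: from the budget-$m$ recurrence it extracts $f(\ALG^j)\ge\bigl(1-(1-1/m)^j\bigr)f(E)$, sums over $j$, and then upper-bounds $g(\pi^*)\le m\,f(E)$; averaging yields the factor $\tfrac{1}{m}\sum_{j=1}^m\bigl(1-(1-1/m)^j\bigr)\to 1/e$. You instead compare each greedy prefix to the \emph{step-specific} optimum $\OPT_j=\max\{f(S):|S|\le j\}$, invoking the full NWF bound $f(H_j)\ge\bigl(1-(1-1/j)^j\bigr)\OPT_j\ge(1-1/e)\,\OPT_j$ at every step, and then bound $f(H^j(\pi^*))\le\OPT_j$ termwise before summing. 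The prefix-consistency observation (that the first $j$ greedy picks are exactly the NWF greedy output for budget $j$) is the hinge that makes this work, and it is sound. The upshot is that your route gives $(1-1/e)$ rather than $1/e$; this actually resolves the paper's stated open question of whether $1/e$ is tight (it is not). The only implicit assumption worth flagging is normalization $f(\emptyset)=0$, which the paper's proof also uses without comment.
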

\begin{proof}
Let $\pi^* := \argmax g(\pi)$ be an optimal ordering and let $\ALG$ be the ordering selected by the greedy algorithm.
For $j \in [m]$, let ${\pi^*}^j = \{{\pi^*}^{-1}(1), {\pi^*}^{-1}(2), \ldots, {\pi^*}^{-1}(j)\}$ and let $\ALG^j = \{\ALG^{-1}(1), \ALG^{-1}(2), \ldots, \ALG^{-1}(j)\}$.
The greedy algorithm analysis of~\cite{nemhauser1978analysis} together with $\mF = 2^E$ and $f$ monotone submodular imply $E = \argmax \{f(S): S \in \mF\}$ and so $f(E)(1-(1-1/m)^j) \leq f({\ALG}^j)$ for every $j \in [m]$. 
Summing over $j \in [m]$ we obtain $f(E)\sum_{j=1}^m(1-(1-1/m)^j) \leq \sum_{j=1}^m f({\ALG}^j) = g(\ALG)$.
Since $f$ is monotone, $f({\pi^*}^j) \leq f(E)$ for every $j \in [m]$ and so $g(\pi^*) = \sum_{j=1}^m f({\pi^*}^j)\leq m f(E)$.
This implies $g(\pi^*) \cdot \frac{1}{m} \sum_{j=1}^m(1-(1-1/m)^j) \leq g(\ALG)$.
Lastly, note that $\frac{1}{m} \sum_{j=1}^m(1-(1-1/m)^j) \rightarrow 1/e$ from above as $m \rightarrow \infty$.
\end{proof}

\section*{Acknowledgements}
Work partially supported by the National Science Foundation through Grant No.~1839346 and 1952011.
The authors would like to thank the anonymous referee for their careful review and suggestions.
The first author would like to thank Shriya Nagpal for thoughtful discussions.

\bibliography{bib}
\bibliographystyle{siam}

\end{document}